\newtheorem{theorem}{Theorem}[section]
\newtheorem{lemma}[theorem]{Lemma}
\newtheorem{corollary}[theorem]{Corollary}%
\newtheorem{proposition}[theorem]{Proposition}
\theoremstyle{definition}
\theoremstyle{remark}
\newtheorem{remark}[theorem]{Remark}
\numberwithin{equation}{section}
\begin{document}

\markboth{Masakazu Teragaito}
{Generalized torsion elements and hyperbolic links}

%%%%%%%%%%%%%%%%%%%%% Publisher's Area please ignore %%%%%%%%%%%%%%
%\catchline{}{}{}{}{}
%%%%%%%%%%%%%%%%%%%%%%%%%%%%%%%%%%%%%%%%%%%%%%%%%%%%%%%%%%%%%%%%%%%

\title{Generalized torsion elements and hyperbolic links}

\author{Masakazu Teragaito
%\footnote{Typeset names in 8~pt Times Roman, uppercase
%and lightface.  Use footnotes only to indicate if permanent and
%present addresses are different. Funding information should go
%in the Acknowledgement section.}
}

\address{Department of Mathematic Education, Hiroshima University,
1-1-1 Kagamiyama, Higashi-hiroshima, 7398524, Japan}
\email{teragai@hiroshima-u.ac.jp}
\thanks{This work was supported by
JSPS KAKENHI Grant Number 20K03587.}

%\footnote{
%affiliation and mailing addresses in 8pt Times italic.} \\
%}

\maketitle

\begin{abstract}
In a group,
a generalized torsion element is a non-identity element whose some non-empty finite product of its conjugates yields
the identity.  Such an element is an obstruction for a group to be bi-orderable.
We show that the Weeks manifold, the figure-eight sister manifold, and the complement of Whitehead sister link
admit generalized torsion elements in their fundamental groups.
In particular, the Whitehead sister link, which is the pretzel link of type $(-2,3,8)$,  can be generalized to hyperbolic pretzel links of type $(-2,3,2n)\ (n\ge 4)$.
These give the first examples of hyperbolic links whose link groups admit generalized torsion elements.
\end{abstract}

%\keywords{generalized torsion element, bi-ordering, hyperbolic link, figure-eight sister manifold, Whitehead sister link, pretzel link}

%\subjclass[2010]{Primary 57M25; Secondary 06F15, 20F60}
\maketitle

\section{Introduction}

In a group $G$, a non-trivial element $g$ is called a \textit{generalized torsion element\/} if
some non-empty finite   product of its conjugates is equal to the identity.
That is, $g^{a_1}g^{a_2}\dots g^{a_k}=1$ for some $a_1,a_2,\dots, a_k\in G$.
Here $g^a$ denotes a conjugate of $g$ by $a\in G$.

Every knot or link group is torsion free.
However, it may contain a generalized torsion element.
For example, all torus knot groups satisfy it.
Naylor and Rolfsen \cite{NR} gave the first example of hyperbolic knot, which is the $(-2)$-twist knot,
whose knot group contains a generalized torsion element.
Then we showed that any negative twist knot enjoys the same property \cite{T}.

The existence of generalized torsion element is an obstruction for a group $G$ to  be bi-orderable.
Recall that $G$ is said to be \textit{bi-orderable\,} if it admits a strict total ordering which is invariant
under multiplication from left and right sides;
if $a<b$, then $gah<gbh$ for any $g$ and $h$ in $G$.
If $G$ is bi-orderable, then any non-identity element $g$ is bigger or smaller than the identity.
Let $g>1$.
Since conjugation preserves the order, $g^a>1$ for any $a\in G$.
Then any product of such elements is still bigger than $1$.
Similarly for the case  $g<1$.
Thus any bi-orderable group has no generalized torsion element.

Conversely, even if $G$ has no generalized torsion element, 
we cannot claim that $G$ is bi-orderable \cite{B,BL,MR}.
However, we expect that such phenomenon does not occur among $3$-manifold groups \cite{MT}.
Many $3$-manifold groups are know to be not bi-orderable.
For, any finitely generated bi-orderable group surjects on the infinite cyclic group (see \cite{CR}).
In particular, if a $3$-manifold $M$ has finite first homology group, then $\pi_1(M)$ is not  bi-orderable.
In \cite{MT}, we propose a conjecture that 
if $\pi_1(M)$  is not bi-orderable for a $3$-manifold $M$, then it contains
a generalized torsion element.
This is solved affirmatively  for Seifert fibered manifolds, Solvable manifolds, and 
a few infinite families of hyperbolic manifolds.

In this paper, we first examine small volume cusped hyperbolic $3$-manifolds.
The Weeks manifold is the unique closed orientable hyperbolic $3$-manifold of smallest volume \cite{GMM}.
By \cite{CM}, the minimal volume orientable hyperbolic $3$-manifold with one cusp is homeomorphic
to either the figure-eight knot complement or the figure-eight sister manifold, which is also called the sibling manifold.
Furthermore, 
 the minimal volume orientable hyperbolic $3$-manifold with two cusps is homeomorphic
to either the Whitehead link complement or the Whitehead sister manifold, which is the $(-2,3,8)$-pretzel link
complement.
The figure-eight knot complement and the Whitehead link complement have
bi-orderable fundamental groups \cite{PR, KR},
so they do not admit a generalized torsion element in their fundamental groups.

Our first result claims that the others admit generalized torsion elements in the fundamental groups.

\begin{theorem}\label{thm:main1}
The Weeks manifold, the figure-eight sister manifold and the Whitehead sister manifold 
admit generalized torsion elements in their fundamental groups.
\end{theorem}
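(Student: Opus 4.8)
The plan is to produce explicit finite presentations of the three fundamental groups, to exhibit in each a concrete candidate element $g$, to verify by a direct word computation that some product of conjugates of $g$ is trivial, and finally to certify that $g$ itself is non-trivial. Since all three manifolds are closely related by Dehn filling---indeed the Weeks manifold and the figure-eight sister manifold arise as Dehn fillings of cusps of the Whitehead sister link complement---I would first settle the two-cusped case of the $(-2,3,8)$-pretzel link and then transport the element along the filling quotients, using the principle that if $\varphi\colon \pi_1(X)\to \pi_1(M)$ is the surjection induced by Dehn filling, then the relation $g^{a_1}\cdots g^{a_k}=1$ is preserved, so $\varphi(g)$ is again a generalized torsion element provided it is non-trivial.

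For the Whitehead sister link complement $X$, I would read off a two-generator presentation from its description as the $(-2,3,8)$-pretzel link, for instance via a Wirtinger presentation simplified using the pretzel structure. The candidate $g$ should be built from the $(-2)$-tangle: in the earlier twist-knot cases the generalized torsion element came precisely from the negative clasp, and the fact that the family $(-2,3,2n)\ (n\ge 4)$ is singled out strongly suggests a uniform word $g$, a short product of meridians, whose conjugate-product identity is insensitive to $n$. Concretely I would search for a short cyclic list of conjugators $a_1,\dots,a_k$ with $g^{a_1}g^{a_2}\cdots g^{a_k}=1$ and reduce the left-hand side to the identity using only the defining relators; this step is a pure word computation.

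To show $g\ne 1$ I cannot use the abelianization, since each of these manifolds has finite first homology group, so the easy abelian obstruction vanishes. Instead I would invoke the discrete faithful holonomy representation $\rho\colon \pi_1 \to \mathrm{PSL}(2,\mathbf{C})$, which exists and is injective because the manifolds are hyperbolic; then $g\ne 1$ is equivalent to $\rho(g)\ne \pm I$, and I would verify the latter by evaluating the matrix $\rho(g)$ from explicit generator matrices. Alternatively a surjection onto a suitable finite group separating $g$ from the identity would suffice. Having established that $g$ is a generalized torsion element of $\pi_1(X)$, I would pass to each filled manifold $M$ (the figure-eight sister manifold and the Weeks manifold) and again check with the holonomy representation of $M$ that the image $\varphi(g)$ remains non-trivial.

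The two genuinely non-routine points are the following. First, guessing the correct element $g$ together with its conjugating word list; I expect this requires either the geometric intuition coming from the negative tangle or a short machine search. Second, certifying $g\ne 1$ in the closed Weeks manifold, where finite homology forces a representation-theoretic argument rather than an abelian one. I expect the main obstacle to be ensuring that the single chosen $g$ does not die under the Dehn fillings relating the three cases: guaranteeing the simultaneous survival of $g$ in all three quotients is the delicate step that ties the theorem together.
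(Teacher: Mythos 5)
Your proposal is a strategy outline rather than a proof: the two steps you yourself flag as ``non-routine'' --- producing the candidate element $g$ and the list of conjugators with $g^{a_1}\cdots g^{a_k}=1$ --- are precisely the mathematical content of the theorem, and you defer them to geometric intuition or ``a short machine search'' without certifying that any such $g$ exists. For comparison, the paper's element for the Whitehead sister link $P(-2,3,8)$ is the commutator $[a,b]$ of the two generators of a tunnel-number-one presentation $\langle a,b \mid a\bar{b}^{n-1}a\bar{b}ab=ba\bar{b}a\bar{b}^{n-1}a\rangle$; the relator is rewritten as the commutation relation $[a,b\bar{a}b^{n-1}\bar{a}b]=1$, and the identity $[a,uv]=[a,v][a,u]^v$ together with $[a,\bar{a}]=1$ decomposes this into a product of conjugates of $[a,b]$ (this is exactly where the hypothesis $n\ge 1$ enters, so the identity is not ``insensitive to $n$''). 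Non-triviality of $[a,b]$ is then free because the link group is non-abelian. Nothing in your write-up supplies a substitute for this computation.

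There are two further concrete problems. First, your stated reason for abandoning the abelianization is factually wrong: the figure-eight sister manifold and the Whitehead sister link complement are cusped, hence have infinite $H_1$, and even the closed Weeks manifold has $H_1\cong\mathbb{Z}_5\oplus\mathbb{Z}_5$, which is finite but non-trivial. The paper's non-triviality certificates are purely homological for two of the three manifolds (the chosen element is a meridian $a$ mapping to a generator of a $\mathbb{Z}_5$ summand), so no holonomy representation is needed. Second, you route all three cases through the Whitehead sister link complement, asserting without justification that the Weeks and figure-eight sister manifolds are Dehn fillings of it; the paper instead obtains these two from the Whitehead \emph{link} complement $W$ --- whose group is bi-orderable, so no generalized torsion element exists upstairs --- and manufactures the generalized torsion element from the surgery relation itself: for $W(m/n)$ with $m\ge 2n$ the filling relation becomes a product of conjugates of the meridian $a$. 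This distinction is not cosmetic for your plan: pushing the commutator $[a,b]$ into a closed filling kills it in homology, so you would genuinely need the representation-theoretic verification you sketch but do not carry out, on top of first establishing the filling descriptions you assume. As written, the proposal does not constitute a proof.
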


As remarked above, the Whitehead sister manifold is the complement of the $(-2,3,8)$-pretzel link.
Second, we generalize this to an infinite family of pretzel links.

\begin{theorem}\label{thm:main2}
Let $L$ be the two-component pretzel link $P(-2,3,2n)$.
If $n\ge 1$, then the link group $\pi_1(S^3-L)$ contains a generalized torsion element.
\end{theorem}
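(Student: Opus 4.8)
The plan is to work directly with a group presentation coming from the standard three-tangle diagram of $L = P(-2,3,2n)$ and to produce an explicit generalized torsion element together with an explicit vanishing product of its conjugates. I would begin by choosing a Wirtinger-type presentation adapted to the tangle decomposition. The $-2$-tangle and the $3$-tangle contribute a fixed, $n$-independent collection of meridional generators and relations, while the vertical $2n$-twist region contributes a chain of meridional generators $c_1, c_2, \dots$ in which each crossing relation expresses $c_{i+1}$ as a conjugate of $c_i$ (up to inversion). After Tietze simplification this should yield a presentation in a bounded number of meridional generators, say $x$ and $y$, in which the integer $n$ enters only as an exponent controlling how many times one slides through the twist region.

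With this presentation in hand, I would take as candidate generalized torsion element the same type of word $g$ (a short product or commutator of the meridional generators $x,y$) that works for the Whitehead sister manifold, that is, the case $n=4$ already treated in Theorem~\ref{thm:main1}. The base case tells us the correct shape of $g$ and, crucially, the correct shape of the conjugators; the task is then to guess and verify a family $a_1,\dots,a_k \in \pi_1(S^3-L)$, whose number $k$ is allowed to grow with $n$, for which $g^{a_1}g^{a_2}\cdots g^{a_k}=1$. I expect the conjugators to be successive powers of a single \emph{twist element} $w$ that advances one step along the $2n$-chain, so that the product telescopes against the chain relations.

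The verification I would carry out by induction on $n$. The inductive step should amount to absorbing one additional full twist: inserting one more factor $g^{w^{j}}$ and using the single crossing relation $c_{i+1}=c_i^{\pm}$ to rewrite the enlarged product back into the form handled at the previous stage, with the fixed relations of the $-2$- and $3$-tangles entering only at the two ends. Finally I would confirm $g\neq 1$ — for instance by checking that its image is non-trivial in the abelianization, or in the discrete faithful (holonomy) representation for the hyperbolic members of the family — so that $g$ is genuinely a generalized torsion element rather than the identity.

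The step I expect to be the main obstacle is precisely finding the correct uniform pattern of conjugators and proving that the product collapses for every $n$. The $n=4$ computation only pins down a single instance; extracting from it a rule that survives the growing twist region, and checking that the boundary interaction with the fixed tangles does not obstruct the telescoping, is the delicate part. If a clean telescoping cannot be arranged, the fallback is to set up the induction so that the length-$k$ product for parameter $n$ reduces to the length-$(k-2)$ product for parameter $n-1$ after one application of the twist-region relation, thereby reducing everything to the base case supplied by Theorem~\ref{thm:main1}.
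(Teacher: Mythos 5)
There is a genuine gap: your proposal is a program rather than a proof. The two things a proof of this statement must supply --- an explicit non-trivial element $g$ and an explicit identity $g^{a_1}\cdots g^{a_k}=1$ valid for every $n\ge 1$ --- are exactly the things you defer, and you say yourself that finding the uniform pattern of conjugators and verifying the collapse is ``the main obstacle.'' Nothing in the proposal guarantees that a telescoping family of conjugators by powers of a single twist element exists; the analogy with twist knots is suggestive but unverified, and the interaction with the $-2$- and $3$-tangles is precisely where such a guess could fail. The induction also has no base: the Whitehead sister case $n=4$ is not established independently in the paper --- Theorem \ref{thm:main1} for that manifold is itself deduced from the general pretzel theorem --- so you cannot bootstrap from it without circularity or without doing the $n=4$ computation from scratch in your chosen presentation.

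For contrast, the paper's mechanism is quite different from the one you are hunting for, and it sidesteps the telescoping problem entirely. Using that $P(-2,3,2n)$ has tunnel number one, one obtains a two-generator one-relator presentation
\[
G=\langle a, b \mid a\bar{b}^{\,n-1}a\bar{b}ab=ba\bar{b}a\bar{b}^{\,n-1}a\rangle,
\]
where $a,b$ are duals of meridian disks of the complementary genus-two handlebody (not Wirtinger meridians). From the relator one extracts the commutator identity $[a,\,b\bar{a}b^{n-1}\bar{a}b]=1$, and then the elementary identity $[a,uv]=[a,v][a,u]^{v}$ together with $[a,\bar{a}]=1$ shows that the commutator of $a$ with \emph{any} word in $\bar{a}$ and $b$ alone is a product of conjugates of $[a,b]$. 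The hypothesis $n\ge 1$ is exactly what makes $b\bar{a}b^{n-1}\bar{a}b$ such a word, and $[a,b]\ne 1$ because $G$ is non-abelian; hence $[a,b]$ is the generalized torsion element. If you want to salvage your approach, the realistic route is to first nail down a concrete presentation in which $n$ enters as a single exponent and then look for a commutator-type identity of this kind, rather than trying to guess a telescoping product of meridian conjugates.
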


This theorem immediately gives the first examples of hyperbolic links whose
link groups admit generalized torsion elements.

\begin{corollary}
There are infinitely many hyperbolic two-component links whose link groups
admit generalized torsion elements. 
\end{corollary}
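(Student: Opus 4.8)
The plan is to read the corollary off from Theorem~\ref{thm:main2} once we know that infinitely many of the links $P(-2,3,2n)$ are hyperbolic and pairwise inequivalent. Write $L_n=P(-2,3,2n)$; by the theorem each $L_n$ with $n\ge 1$ is a two-component link whose group $\pi_1(S^3-L_n)$ carries a generalized torsion element. Thus the entire content of the corollary is geometric: it suffices to exhibit an infinite subfamily of hyperbolic links among the $L_n$, no two of which are equivalent.

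First I would use that every pretzel link is a Montesinos link: $L_n$ is the Montesinos link built from the three rational tangles $-1/2$, $1/3$, and $1/(2n)$, and its double branched cover is the Seifert fibred space over the orbifold $S^2(2,3,2n)$. By the classification of Montesinos links (Bonahon--Siebenmann, Oertel), a reduced Montesinos link with exactly three tangles of denominators $(\alpha_1,\alpha_2,\alpha_3)$ fails to be hyperbolic only when the base triple is spherical or Euclidean, that is, only when $1/\alpha_1+1/\alpha_2+1/\alpha_3\ge 1$. For $L_n$ one computes
\[
\frac12+\frac13+\frac1{2n}=\frac56+\frac1{2n},
\]
which is strictly less than $1$ exactly when $2n>6$, i.e.\ $n\ge 4$. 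Hence $L_n$ is hyperbolic for every $n\ge 4$, the first member $L_4=P(-2,3,8)$ being the Whitehead sister link of the introduction.

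It remains to check that these links are genuinely distinct. Again I would invoke the Bonahon--Siebenmann classification, which determines a Montesinos link with at least three tangles by its tangle slopes $\beta_i/\alpha_i$ modulo $1$, up to cyclic permutation, reversal, and the value of the integral framing: since the slopes $-1/2$ and $1/3$ are fixed while the slopes $1/(2n)$ are pairwise distinct modulo $1$ (indeed $1/(2n)\le 1/8<1/3<1/2$ for $n\ge 4$), the links $L_n$ $(n\ge 4)$ are pairwise inequivalent. (Alternatively, $S^3-L_n$ arises by $1/n$ Dehn filling on one cusp of a fixed cusped hyperbolic manifold $W$, the complement of $P(-2,3,0)$ together with an unknot encircling the last twist region; by Thurston's hyperbolic Dehn surgery theorem the volume of $S^3-L_n$ strictly approaches that of $W$ from below, hence takes infinitely many values, so the complements are infinitely often distinct.) Combining these hyperbolic, pairwise inequivalent links with the generalized torsion elements furnished by Theorem~\ref{thm:main2} yields infinitely many hyperbolic two-component links of the required type, proving the corollary. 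The sole substantive step is the hyperbolicity statement, and it poses no real obstacle, since the hyperbolization of three-tangle Montesinos links with hyperbolic base orbifold is precisely what the cited classification delivers; all remaining points are direct appeals to Theorem~\ref{thm:main2} and to standard facts about Montesinos links.
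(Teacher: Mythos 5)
Your proof is correct and follows the same skeleton as the paper's: restrict to $n\ge 4$, observe those links are hyperbolic, and quote Theorem~\ref{thm:main2}. The only real difference is in how the hyperbolicity is sourced. The paper simply cites Martelli--Petronio \cite{MP}, who obtain the exteriors of $P(-2,3,2n)$ by Dehn filling the magic manifold and determine exactly which fillings are non-hyperbolic; you instead derive the same threshold $n\ge 4$ from the Bonahon--Siebenmann/Oertel classification of Montesinos links via the sign of $1/2+1/3+1/(2n)-1$, which is a perfectly standard and correct alternative (and reassuringly reproduces the paper's cutoff). You also explicitly verify that the links $L_n$ are pairwise inequivalent, either by the Montesinos classification or by the volume argument from Thurston's Dehn surgery theorem; the paper leaves this implicit, and while it is arguably needed to justify the word ``infinitely many,'' it is routine and your treatment of it is sound. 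In short: same strategy, with the one external input justified by a different (equally valid) route and one implicit step made explicit.
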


\begin{proof}
If $n\ge 4$, then $P(-2,3,2n)$ is hyperbolic \cite{MP}.
The conclusion immediately follows from Theorem \ref{thm:main2}.
\end{proof}

Throughout the paper, 
we use the notation $\bar{g}=g^{-1}$, $[g,h]=g^{-1}h^{-1}gh$ and $g^a=a^{-1}ga$
for the inverse, the commutator and the conjugate in a group.

%%%

\section{The Weeks manifold and the figure-eight sister manifold}

We start from the exterior $W$ of the Whitehead link.
Then $\pi_1(W)$ has the following Wirtinger presentation
\[
\pi_1(W)=\langle a, b \mid aba\bar{b}\bar{a}bab=bab\bar{a}\bar{b}aba\rangle,
\]
where $a$ and $b$ are meridians of the components as shown in Fig.~\ref{fig:link}.

\begin{figure}[htpb]
%\begin{center}%\includegraphics*[bb=0 0 182 192]{link.pdf}
\includegraphics*[width=4cm]{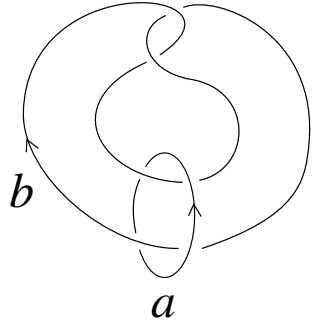}
\caption{The Whitehead link and the meridians $a$, $b$}\label{fig:link}
%\end{center}
\end{figure}

The figure-eight sister manifold is described as the resulting manifold by performing $5/1$-Dehn filling
on one boundary component of $W$ (see \cite{G}).
Two components of the Whitehead link are interchangeable by an isotopy,
so there is no ambiguity for the choice of boundary component.

\begin{theorem}\label{thm:w}
For a slope $m/n\ (n\ge 1)$, let $W(m/n)$ be the resulting manifold by $m/n$-Dehn filling on one boundary component of 
the Whitehead link exterior $W$.
If $m\ge 2n$, then $\pi_1(W(m/n))$ contains a generalized torsion element.
In particular, the figure-eight sister manifold $W(5)$ satisfies this.
\end{theorem}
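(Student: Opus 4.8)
The plan is to produce an explicit generalized torsion element in $\pi_1(W(m/n))$ and to certify the killing relation by an explicit finite product of conjugates. Since $W(m/n)$ is obtained from $W$ by $m/n$-Dehn filling on one cusp, its fundamental group is the quotient of the two-generator group
\[
\pi_1(W)=\langle a,b \mid aba\bar b\bar a bab = bab\bar a\bar b aba\rangle
\]
by the normal closure of the filling slope word $\mu^m\lambda^n$, where $\mu$ is the meridian (one of $a$, $b$) and $\lambda$ is the corresponding longitude of the filled component. So first I would fix conventions: choose the filled boundary component, identify its meridian with (say) $a$, and write down the longitude $\lambda$ explicitly as a word in $a,b$ read off from the Whitehead link diagram in Fig.~\ref{fig:link}. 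This makes the added relation $a^m\lambda^n=1$ concrete.

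Next, guided by the known constructions for twist knots in \cite{NR,T}, I would guess that the generalized torsion element is the meridian $a$ itself, or a short conjugate/commutator built from $a$ and $b$ such as $g=[a,b]$ or $g=a\bar b$. The heart of the argument is the algebra: I would look for group elements $x_1,\dots,x_k$ so that
\[
g^{x_1}g^{x_2}\cdots g^{x_k}=1
\]
holds in $\pi_1(W(m/n))$, i.e.\ modulo the Wirtinger relation and the filling relation. The hypothesis $m\ge 2n$ should be exactly what guarantees that enough copies of $g$ can be assembled; I expect the inequality to enter because the longitude contributes a fixed ``exponent budget'' and one needs $m$ large relative to $n$ to absorb it, so that the product telescopes. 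A natural strategy is to show that some conjugate of $g$ equals a power of the filling word up to a product of conjugates of $g$, and then invoke $a^m\lambda^n=1$ to collapse that power to the identity. One must also verify that $g\ne 1$ in $W(m/n)$ so that it is genuinely a generalized torsion element and not the trivial one; since these manifolds are hyperbolic with nontrivial fundamental group, this can be checked via the holonomy representation or by the fact that $g$ has nontrivial image in homology or under the discrete faithful $\mathrm{PSL}_2(\mathbb{C})$ representation.

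The main obstacle will be finding the correct product of conjugates and proving the relation purely group-theoretically: the Wirtinger relation is long and the rewriting is delicate, so the calculation is where errors hide. I would control it by working in stages — first establish useful consequence-relations in $\pi_1(W)$ alone (for instance simplified forms of $a^b$, $b^a$, or of the longitude $\lambda$), record them as a short list of lemmas, and only afterward impose $a^m\lambda^n=1$. To reduce the risk of a hidden sign or inverse error, I would cross-check the final word identity numerically by substituting the explicit parabolic/loxodromic matrices for $a,b$ coming from the hyperbolic structure on $W(5)$, confirming that the asserted product of conjugates maps to the identity matrix while $g$ does not. The specialization to the figure-eight sister manifold $W(5)=W(5/1)$ is then immediate from $m=5\ge 2=2n$, so the last sentence of the theorem needs no separate argument once the general case is settled.
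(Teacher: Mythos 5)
Your overall strategy is the same as the paper's: impose the filling relation $a^m\lambda^n=1$, use the Wirtinger relation to rewrite it, exhibit the result as a product of conjugates of a single element, and certify nontriviality of that element via homology (your homology suggestion is exactly what the paper uses, and is cleaner than invoking the holonomy representation since it covers non-hyperbolic fillings too). The correct candidate, the meridian $a$ itself, is among your guesses.

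However, there is a genuine gap: the decisive computation is only described as a search (``I would look for group elements $x_1,\dots,x_k$\dots''), and without it the proof is not done. The two concrete facts you are missing are these. First, the longitude of the filled component is $\lambda=\bar b^{\,ab\bar a\bar b}b$, and the Wirtinger relation can be rearranged to give
\[
ba\bar b\bar a\bar b ab=a^{\bar b\bar a}a^{b}\bar a,
\]
so that $\lambda=(ba\bar b\bar a\bar b ab\bar a)=U\bar a^{2}$ with $U=a^{\bar b\bar a}a^{b}$, a product of two conjugates of $a$ followed by $\bar a^{2}$. Second, the telescoping identity
\[
(U\bar a^{2})^{n}a^{m}=U\,U^{a^{2}}U^{a^{4}}\cdots U^{a^{2(n-1)}}a^{m-2n}
\]
converts the filling relation into a product of $2n$ conjugates of $a$ times $a^{m-2n}$. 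This is precisely where the hypothesis $m\ge 2n$ enters: it guarantees the trailing power of $a$ is nonnegative, so the whole left-hand side is a nonempty product of conjugates of $a$ equal to $1$. Your intuition that $m\ge 2n$ ``absorbs the exponent budget of the longitude'' is correct, but the budget is exactly $-2$ per copy of $\lambda$, and establishing that requires the specific rearrangement of the Wirtinger relation above; without producing it, the claim that such a product of conjugates exists remains unproved.
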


\begin{proof}
Perform $m/n$-surgery along the component with meridian $a$.
Then the surgery yields a relation
\begin{equation}(\bar{b}^{ab\bar{a}\bar{b}}b)^n a^m=1,
\end{equation}
because the longitude is $\bar{b}^{ab\bar{a}\bar{b}}b$.
This gives
\begin{equation}\label{eq1}
(ba\bar{b}\bar{a}\bar{b}ab\bar{a})^na^m=1.
\end{equation}
Hence
\begin{equation}
\pi_1(W(m/n))=\langle a,b \mid 
aba\bar{b}\bar{a}bab=bab\bar{a}\bar{b}aba, 
(ba\bar{b}\bar{a}\bar{b}ab\bar{a})^na^m=1\rangle.
\end{equation}

The first relation gives
\begin{equation}\label{eq2}
ba\bar{b}\bar{a}\bar{b}ab=a^{\bar{b}\bar{a}}a^b\bar{a}.
\end{equation}
Hence (\ref{eq1}) and (\ref{eq2}) yield
$(a^{\bar{b}\bar{a}}a^b\bar{a}^2)^na^m=1$,
so  we have
\begin{equation}\label{eq3}
UU^{a^2}U^{a^4}\cdots U^{a^{2(n-1)}}a^{m-2n}=1,
\end{equation}
where $U=a^{\bar{b}\bar{a}}a^b$.
Thus if $m\ge 2n$, then the left hand side of (\ref{eq3}) is a product of conjugates of only $a$.

Since $H_1(W(m/n))=\mathbb{Z}\oplus \mathbb{Z}_{|m|}$ and the element $a$ goes to a generator of $\mathbb{Z}_{|m|}$ summand,
$a$ is non-trivial in $\pi_1(W(m/n))$.
Hence if $m\ge 2n$, then (\ref{eq3}) shows that the element $a$ is a generalized torsion element in $\pi_1(W(m/n))$.
\end{proof}

\begin{remark}
\begin{enumerate}
\item
Since $W(1)$ is the exterior of the trefoil, $\pi_1(W(1))$ contains a generalized torsion element (see  \cite{NR}).
\item
If $m=-1$, then $W(-1/n)$ gives the exterior of the $n$-twist knot.
For example, $W(-1)$ is the figure-eight knot exterior.
Since the knot group of any positive twist knot is known to be bi-orderable \cite{C}, $\pi_1(W(-1/n))$ does not contain a generalized torsion element for any $n\ge 1$.
On the other hand, if $m=1$, then $\pi_1(W(1/n))$ contains a generalized torsion element, because the knot group
of any negative twist knot admits a generalized torsion element \cite{T}.
\end{enumerate}

\end{remark}

Let $M=W(m/n)$, and let $M(r)$ denote the $r$-Dehn filling on $M$.
The Weeks manifold is $M(5/2)$ for $M=W(5)$ (see \cite{CD}).

\begin{corollary}\label{cor:weeks}
If  $m\ge 2n$.
$\pi_1(M(r))$ contains a generalized torsion element for any slope $r\in \mathbb{Q}\cup \{1/0\}$.
In particular, the Weeks manifold $M(5/2)$ satisfies this.
\end{corollary}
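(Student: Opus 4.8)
The plan is to treat the extra Dehn filling as a passage to a quotient group. Filling the one remaining cusp of $M=W(m/n)$ along a slope $r$ amounts to killing the normal closure of a single word, so $\pi_1(M(r))$ is a quotient of $\pi_1(M)$ for every $r\in\mathbb{Q}\cup\{1/0\}$. A relation asserting that some non-empty product of conjugates of a fixed element equals the identity is preserved under any group homomorphism, so the generalized-torsion relation manufactured in Theorem \ref{thm:w} should descend automatically along the quotient map. Hence essentially all of the work reduces to checking that the witnessing element $a$ does not collapse to the identity after the filling.

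Concretely, I would recall relation (\ref{eq3}) from the proof of Theorem \ref{thm:w}. Under the hypothesis $m\ge 2n$ the exponent $m-2n$ is non-negative, and $U=a^{\bar{b}\bar{a}}a^b$ is a product of two conjugates of $a$, so the left-hand side of (\ref{eq3}) is a genuinely non-empty product of conjugates of $a$ alone (the factor $U$ already appears since $n\ge 1$). Because $\pi_1(M(r))$ is a quotient of $\pi_1(M)=\pi_1(W(m/n))$, the image of this relation still holds in $\pi_1(M(r))$: the same word, now read in $\pi_1(M(r))$, exhibits a non-empty product of conjugates of $a$ equal to the identity. Thus $a$ satisfies the defining equation of a generalized torsion element, provided $a\ne 1$.

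The remaining and central step is to verify $a\ne 1$ in $\pi_1(M(r))$ for every slope $r$, which I would do homologically. The decisive feature is that the Whitehead link has linking number zero, so the longitude of the $b$-component is null-homologous in $H_1(W)$, and this persists after the first filling, i.e.\ it dies in $H_1(M)$. Consequently, filling the remaining cusp along $r=p/q$ imposes in homology only the relation $pb=0$, the longitudinal part $q\lambda_b$ contributing nothing, and this does not involve $a$. Therefore the $\mathbb{Z}_{|m|}$ summand of $H_1(M)$ generated by $a$ survives intact, and $H_1(M(r))$ still contains a summand $\mathbb{Z}_{|m|}$ in which $a$ maps to a generator. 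Since $m\ge 2n\ge 2$, this summand is non-trivial, so $a\ne 1$ in $H_1(M(r))$ and hence in $\pi_1(M(r))$.

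The one point demanding care, and the only place where the choice of slope could a priori cause trouble, is precisely this last homological claim, in particular for the degenerate slope $1/0$ and for slopes where one might fear the longitude enters; the null-homology of the longitude is exactly what makes the argument uniform across all $r$ and keeps the order of $a$ equal to $|m|$ regardless of $r$. Granting it, $a$ is a generalized torsion element in $\pi_1(M(r))$ for every $r\in\mathbb{Q}\cup\{1/0\}$, and specializing to $M=W(5)$ with $r=5/2$ yields the statement for the Weeks manifold.
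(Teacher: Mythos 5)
Your proposal is correct and follows essentially the same route as the paper: pass the relation (\ref{eq3}) to the quotient $\pi_1(M(r))$ and verify $a\ne 1$ via $H_1(M(r))=\mathbb{Z}_{|p|}\oplus\mathbb{Z}_{|m|}$ with $a$ generating the $\mathbb{Z}_{|m|}$ summand. Your added justification of that homology computation (the longitude of the $b$-component being null-homologous because the linking number is zero) is a detail the paper leaves implicit, and it is the right one.
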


\begin{proof}
Since $\pi_1(M(r))$ is the quotient of $\pi_1(W(m/n))$,
the relation (\ref{eq3}) still holds in $\pi_1(M(r))$.
Set $r=p/q$. 
The element $a$ projects to a generator of $\mathbb{Z}_{|m|}$ summand of $H_1(M(r)))=\mathbb{Z}_{|p|}\oplus \mathbb{Z}_{|m|}$, so
it is nontrivial in $\pi_1(M(r))$.
Hence $a$ remains to be a generalized torsion element in $\pi_1(M(r))$.
\end{proof}

%%%%%%
\section{The Whitehead sister link and pretzel links}

\subsection{Tunnel number one pretzel links}

Let $L$ be the pretzel link $P(-2,3,2n)$ for $n\ne 0$.
In particular, $P(-2,3,8)$ is the Whitehead sister link.
Let $G=\pi_1(S^3-L)$ be the link group.
The purpose of this subsection is to obtain a presentation of $G$ 
with two generators and a single relation based on the fact that $L$ has tunnel number one.

Figure \ref{fig:tunnel} shows the unknotting tunnel $\gamma$ for $L$.
This means that the exterior of the regular neighborhood $N$ of $L\cup \gamma$
is a genus two handlebody $H$.
Hence the exterior of $L$ is obtained from $H$ by attaching a $2$-handle 
along the co-core  $\ell$ of the regular neighborhood of $\gamma$, which is regarded as a $1$-handle attached on the regular neighborhood of $L$.
This implies that the link group $G$ has two generators and a single relation which comes from
the $2$-handle addition.

\begin{figure}[htpb]
%\begin{center}%\includegraphics*[bb=0 0 182 192]{link.pdf}
\includegraphics*[width=10cm]{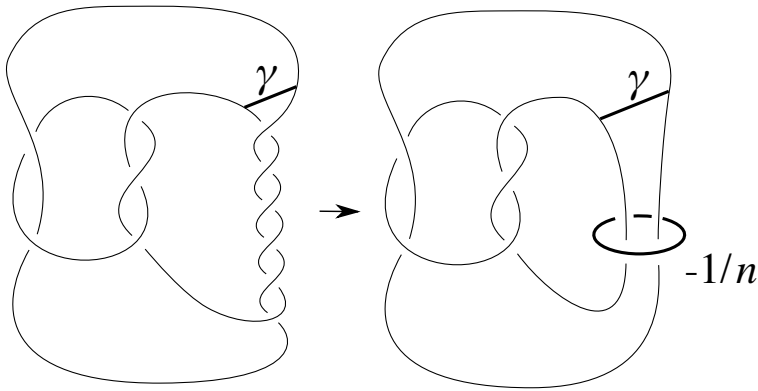}
\caption{The pretzel link $P(-2,3,2n)$, where $n=3$, and its unknotting tunnel $\gamma$}\label{fig:tunnel}
%\end{center}
\end{figure}

To get  a rank two presentation of $G$,
we will trace the co-core $\ell$ on $\partial N$ during
the unknotting transformation of $N$.

First, we replace  the $2n$-twists on $L$  with $(-1/n)$-surgery on the additional unknotted circle
as shown in Fig.~\ref{fig:tunnel}.
Then Fig.~\ref{fig:start} shows $N$ and the co-core $\ell$.

\begin{figure}[htpb]
%\begin{center}%\includegraphics*[bb=0 0 182 192]{link.pdf}
\includegraphics*[width=6cm]{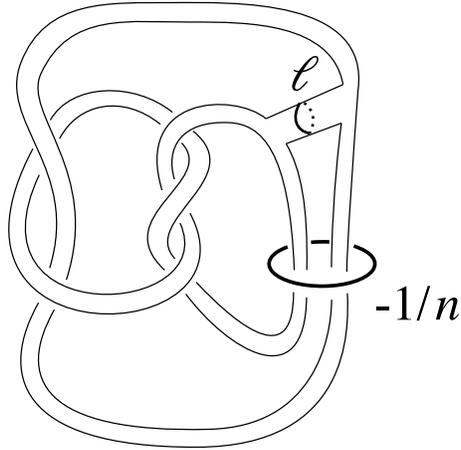}
\caption{The regular neightborhood $N$ of $L\cup \gamma$ and the co-core $\ell$ on $\partial N$}\label{fig:start}
%\end{center}
\end{figure}

As illustrated  in Fig.~\ref{fig:h1}, \ref{fig:h2} and \ref{fig:h3},
we transform $N$.
Here, the loop $\ell$ in Fig.~\ref{fig:h2} and \ref{fig:h3} is described as
a band sum of two circles for simplicity.

\begin{figure}[htpb]
%\begin{center}%\includegraphics*[bb=0 0 182 192]{link.pdf}
\includegraphics*[width=7cm]{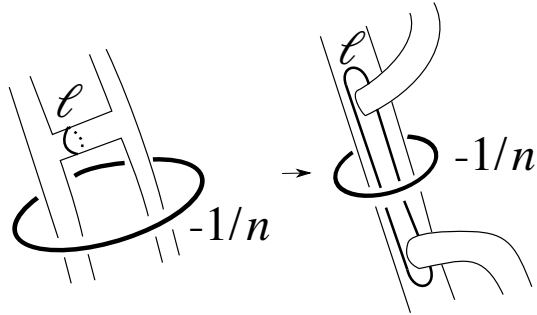}
\caption{The first move of $N$}\label{fig:h1}
%\end{center}
\end{figure}

\begin{figure}[htpb]
%\begin{center}%\includegraphics*[bb=0 0 182 192]{link.pdf}
\includegraphics*[width=10cm]{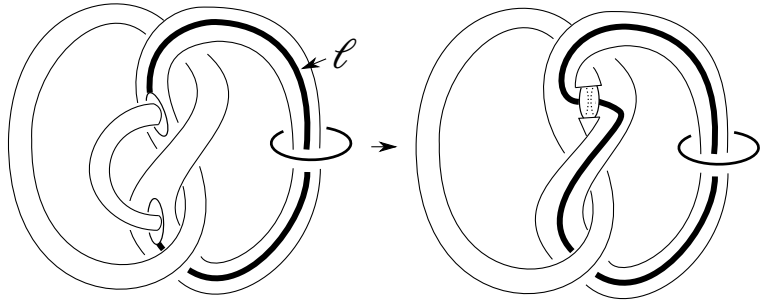}
\caption{The second move of $N$. Here, the loop $\ell$ is described as a band sum of two circles.}\label{fig:h2}
%\end{center}
\end{figure}

\begin{figure}[htpb]
%\begin{center}%\includegraphics*[bb=0 0 182 192]{link.pdf}
\includegraphics*[width=8cm]{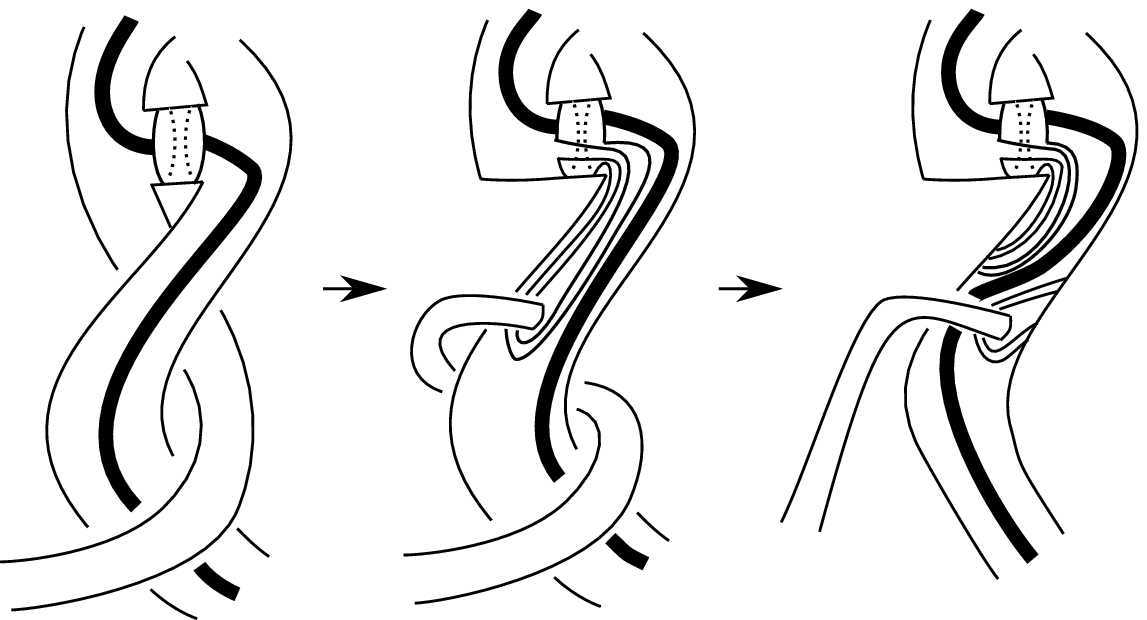}
\caption{The third move of $N$}\label{fig:h3}
%\end{center}
\end{figure}

Figure \ref{fig:h4} shows the final form of $N$ with $\ell$ on $\partial N$.
It is easy to see that the outside of $N$ is also a genus two handlebody $H$.
Here, the loops $\alpha$ and $\beta$ bound mutually disjoint non-separating meridian disks of $H$.
Hence if we take the generators $a$ and $b$ of $\pi_1(H)$ as the duals of $\alpha$ and $\beta$,
then we can easily express $\ell$ as a word of $a$ and $b$ in $\pi_1(H)$ by following
the intersection points between $\ell$ and $\alpha$ and $\beta$.

\begin{figure}[htpb]
\begin{center}%\includegraphics*[bb=0 0 182 192]{link.pdf}
\includegraphics*[width=10cm]{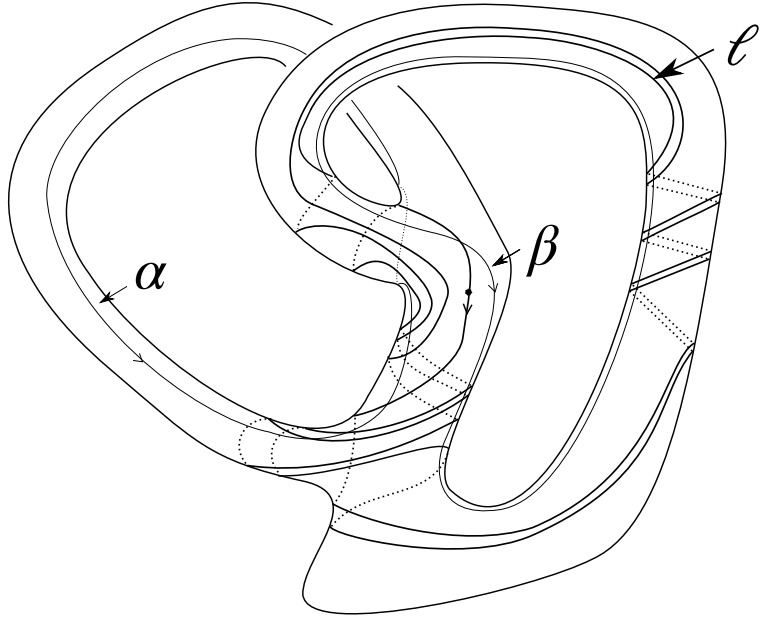}
\caption{The outsider of $N$ is a genus two handlebody $H$.
The loops $\alpha$ and $\beta$ bound disjoint meridian disks of $H$.
Here, $n=3$.}\label{fig:h4}
\end{center}
\end{figure}

\begin{proposition}\label{prop:p}
The link group $G$ has the presentation
\begin{equation}\label{eq:p}
G=\langle a, b \mid 
a\bar{b}^{n-1}a\bar{b}ab=ba\bar{b}a\bar{b}^{n-1}a\rangle.
\end{equation}
\end{proposition}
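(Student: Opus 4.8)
The plan is to compute a Wirtinger-type presentation of the link group directly from the geometric description of the unknotting tunnel, tracing the co-core $\ell$ across the handle moves in Figures 6 through 9. The whole argument reduces to a single topological fact combined with one word-reading computation: the exterior of $L$ is built from a genus two handlebody $H$ by attaching a $2$-handle along $\ell$, so by van Kampen the link group is $\pi_1(H)$ modulo the normal closure of the element represented by $\ell$. Since $\pi_1(H)$ is free of rank two on the duals $a,b$ of the meridian disks bounded by $\alpha$ and $\beta$, this immediately yields a presentation with two generators and one relator, namely $\langle a,b \mid w(a,b)=1\rangle$ where $w$ is the word spelled out by $\ell$. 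Thus I would organize the proof as: (i) justify the $2$-handle decomposition, (ii) read off the word $w$, (iii) rearrange it into the symmetric form stated in $(\ref{eq:p})$.

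\textbf{Step (i): the handle decomposition.} First I would recall from the preceding discussion that $L$ has tunnel number one, so $N(L\cup\gamma)$ has genus two handlebody complement $H$, and the exterior of $L$ is recovered by reattaching the $2$-handle along the co-core $\ell$ of the tunnel's $1$-handle. This is the structural input that guarantees a rank two, single relator presentation; it requires no computation, only the identification already set up in Figures 2--3 and the text preceding the Proposition. I would state explicitly that $\pi_1(S^3-L)\cong \pi_1(H)/\langle\langle [\ell]\rangle\rangle \cong \langle a,b \mid [\ell]=1\rangle$.

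\textbf{Step (ii): reading the word.} The substantive part is to verify that the loop $\ell$, drawn on $\partial N$ in the final configuration of Figure 9, spells the word claimed by the relation. With $a$ and $b$ chosen as duals to the meridian disks bounded by $\alpha$ and $\beta$, each signed intersection of $\ell$ with $\alpha$ contributes $a^{\pm 1}$ and each with $\beta$ contributes $b^{\pm 1}$, in the cyclic order encountered along $\ell$. The $2n$-twist region, converted to $(-1/n)$-surgery on the auxiliary unknot in Figure 5, is what produces the exponent $n-1$ on the $\bar b$ powers, and I would track carefully how the band-sum simplification of $\ell$ in Figures 7--8 accounts for the bookkeeping of these crossings. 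I expect this careful reading of signs and cyclic order from the diagram to be the main obstacle, since the relator must be transcribed exactly and the orientation conventions have to be pinned down consistently.

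\textbf{Step (iii): symmetrization.} Once the raw word $[\ell]=1$ is obtained, I would cyclically permute and invert to cast it in the balanced form $a\bar b^{\,n-1}a\bar b ab = ba\bar b a\bar b^{\,n-1}a$ appearing in $(\ref{eq:p})$; this is a purely algebraic rewriting that replaces a relator $w=1$ by an equivalent equation $u=v$, valid because conjugating a relator and inverting it do not change the normal closure. A useful sanity check, which I would include, is to abelianize: the presentation should give $H_1(S^3-L)\cong\mathbb{Z}\oplus\mathbb{Z}$ for a two-component link, and the two sides of $(\ref{eq:p})$ must have equal exponent sums in each of $a$ and $b$, which the displayed relation manifestly satisfies. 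This confirms the transcription is at least homologically consistent and guards against sign errors introduced in Step (ii).
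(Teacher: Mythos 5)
Your proposal follows essentially the same route as the paper's proof: identify the exterior of $L$ as the genus two handlebody $H$ with a $2$-handle attached along the co-core $\ell$, take $a,b$ dual to the meridian disks bounded by $\alpha$ and $\beta$, and read the single relator off from the intersections of $\ell$ with $\alpha$ and $\beta$ in the final diagram (the paper likewise just orients $\alpha$, $\beta$, $\ell$ and ``follows $\ell$ from the dot''). Your added abelianization sanity check is a reasonable extra safeguard but does not change the argument.
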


\begin{proof}
The exterior of $L$ is obtained from the genus two handlebody $H$ by attaching a $2$-handle
along the loop $\ell$.
Let $a$ and $b$ be the generators of $\pi_1(H)$, which are duals of $\alpha$ and $\beta$ as shown in Fig.~\ref{fig:h4}.
Hence the link group $G$ has a presentation with generators $a$ and $b$ and a single relation coming from $\ell$.
We orient $\alpha$, $\beta$ and $\ell$ as in Fig.~\ref{fig:h4},
and follow $\ell$ from the dot.
Then we get the relation as in the statement.
\end{proof}

\subsection{Generalized torsion elements}

\begin{lemma}\label{lem:rel}
In $G$, $[a,b\bar{a}b^{n-1}\bar{a}b]=1$.
\end{lemma}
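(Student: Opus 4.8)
The plan is to derive this commuting relation directly from the single defining relation in Proposition~\ref{prop:p}, with no geometric input needed; it is purely a matter of massaging the relator by a short chain of left/right multiplications. The observation that organizes everything is that the relator becomes compact once one introduces the abbreviation $Q=a\bar{b}^{n-1}a$. Grouping the two outer occurrences of $a\bar{b}^{n-1}a$ in the relation $a\bar{b}^{n-1}a\bar{b}ab=ba\bar{b}a\bar{b}^{n-1}a$ rewrites it as $Q\bar{b}ab=ba\bar{b}Q$. At the same time, since $\bar{Q}=\bar{a}b^{n-1}\bar{a}$, the target word satisfies $b\bar{a}b^{n-1}\bar{a}b=b\bar{Q}b$, so the assertion $[a,b\bar{a}b^{n-1}\bar{a}b]=1$ is precisely the statement $a(b\bar{Q}b)=(b\bar{Q}b)a$.

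With these two reductions in hand, the proof is a telescoping sequence of cancellations. First I would multiply $Q\bar{b}ab=ba\bar{b}Q$ on the left by $\bar{Q}$ to clear the leading $Q$, obtaining $\bar{b}ab=\bar{Q}ba\bar{b}Q$; then multiply on the left by $b$ to clear the leading $\bar{b}$, obtaining $ab=b\bar{Q}ba\bar{b}Q$. Finally I would multiply on the right by $\bar{Q}b$, using $Q\bar{Q}=1$ and $\bar{b}b=1$ to collapse the tail, which yields $ab\bar{Q}b=b\bar{Q}ba$. Reading this last equation as $a\,(b\bar{Q}b)=(b\bar{Q}b)\,a$ and substituting $b\bar{Q}b=b\bar{a}b^{n-1}\bar{a}b$ gives exactly $[a,b\bar{a}b^{n-1}\bar{a}b]=1$.

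I expect no serious obstacle: every step is a single multiplication in $G$, so validity is automatic, and the only genuinely creative move is spotting the grouping $Q=a\bar{b}^{n-1}a$ that simultaneously exposes the symmetry of the relator and matches the target word via $w=b\bar{Q}b$. The one thing to watch is the bookkeeping of inverses, namely verifying $\bar{Q}=\bar{a}b^{n-1}\bar{a}$ and checking that the right-multiplication by $\bar{Q}b$ telescopes correctly; these are routine. Notably the argument is uniform in $n$, using the value of $n$ only through the definition of $Q$, so a single computation covers the entire family $P(-2,3,2n)$ at once.
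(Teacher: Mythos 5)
Your proof is correct and is essentially the same as the paper's: the paper also sets $U=a\bar{b}^{n-1}a$ (your $Q$), rewrites the relator as $Ua^b=a^{\bar b}U$, and deduces $a=a^{b^2\bar{U}^b}$, which is exactly your identity $a(b\bar{Q}b)=(b\bar{Q}b)a$ written in conjugation notation. No gap; the bookkeeping of inverses checks out.
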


\begin{proof}
The relation of (\ref{eq:p}) gives
$U a^b = a^{\bar{b}} U$,
where $U=a \bar{b} ^{n-1} a$.
Hence we have $U^b a^{b^2}=aU^b$, so $a=a^{b^2 \bar{U}^b}$.
This gives $[a,b^2\bar{U}^b]=1$, which yields the conclusion.
\end{proof}

\begin{lemma}\label{lem:decomp}
Let $w(\bar{a},b)$ be a word
containing only $\bar{a}$ and $b$. 
Then the commutator $[a,w(\bar{a},b)]$
can be expressed as a product of conjugates of the commutator $[a,b]$.
\end{lemma}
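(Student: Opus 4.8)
The plan is to expand $[a,w]$ one letter at a time by the standard commutator product rule and then to exploit the special shape of $w$. The identity I would record first is $[a,xy]=[a,y]\,[a,x]^{y}$, valid in any group under the paper's conventions $[g,h]=\bar g\bar h gh$ and $g^{a}=\bar a g a$; a one-line computation gives $[a,y]\,[a,x]^{y}=(\bar a\bar y a y)(\bar y\bar a\bar x a x y)=\bar a\bar y\bar x a x y=[a,xy]$, so the identity holds.

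Next I would induct on the length of $w$. Writing $w=w's$ with $s$ the final letter, the product rule gives $[a,w]=[a,s]\,[a,w']^{s}$. By the inductive hypothesis $[a,w']$ is a product of conjugates of the letter-commutators $[a,t]$ as $t$ runs over the letters of $w'$; conjugating such a product by $s$ again yields a product of conjugates of these, and prepending the single factor $[a,s]$ preserves the form. Hence $[a,w]$ is a product of conjugates of the letter-commutators $[a,\bar a]$ and $[a,b]$.

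The decisive step is then to compute the two possible letter-commutators. One has $[a,b]$, the target building block, while a direct calculation gives $[a,\bar a]=\bar a\, a\, a\,\bar a=1$. Every factor of the latter kind disappears, leaving $[a,w]$ expressed purely as a product of conjugates of $[a,b]$, which is the assertion.

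The point that deserves care---and the real content rather than mere bookkeeping---is the \emph{sign}: the conclusion claims conjugates of $[a,b]$ itself, not of $[a,b]^{-1}$. This is precisely where the hypothesis that $w$ contains only $\bar a$ and $b$ enters. Had a letter $\bar b$ occurred, it would contribute $[a,\bar b]$, which equals a conjugate of $[a,b]^{-1}$ and would spoil the statement; the letter $a$ would give the harmless $[a,a]=1$, but the absence of $\bar b$ is what matters. Restricting to the letters $\bar a$ and $b$ is exactly what guarantees that the only surviving contributions are positive conjugates of $[a,b]$.
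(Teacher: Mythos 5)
Your proof is correct and takes essentially the same route as the paper's: the identity $[a,uv]=[a,v]\,[a,u]^{v}$ applied letter by letter, together with the cancellation $[a,\bar a]=1$. Your closing remark about why excluding $\bar b$ is what guarantees conjugates of $[a,b]$ rather than of $[a,b]^{-1}$ is a worthwhile observation that the paper leaves implicit.
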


\begin{proof}
In general, we have an equation $[a,uv]=[a,v][a,u]^v$.
Since $[a,\bar{a}]=1$,
$[a,w(\bar{a},b)]$ is decomposed into a product of conjugates of only $[a,b]$.
\end{proof}

\begin{theorem}\label{thm:pretzel}
If $n\ge 1$, then $G$ admits a generalized torsion element.
\end{theorem}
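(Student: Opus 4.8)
The plan is to show that the commutator $g=[a,b]$ is itself a generalized torsion element, by feeding the two preceding lemmas into one another. The crucial observation is that the word $w=b\bar{a}b^{n-1}\bar{a}b$ appearing in Lemma~\ref{lem:rel} involves only the letters $\bar{a}$ and $b$; that is, it is of the form $w(\bar{a},b)$ in the sense of Lemma~\ref{lem:decomp}.

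First I would apply Lemma~\ref{lem:decomp} to $[a,w]$. Expanding via the identity $[a,uv]=[a,v][a,u]^v$, each occurrence of $b$ in $w$ contributes a conjugate of $[a,b]$, while each occurrence of $\bar{a}$ contributes the trivial factor $[a,\bar{a}]=1$. Since $w$ contains $1+(n-1)+1=n+1$ occurrences of $b$ and $n\ge 1$, the resulting product is non-empty (indeed it has at least two factors), and every factor is a conjugate of $[a,b]$ itself, with no inverses, because all powers of $b$ in $w$ are positive.

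Next I would invoke Lemma~\ref{lem:rel}, which says precisely that $[a,w]=1$ in $G$. Combining the two steps, a non-empty product of conjugates of $[a,b]$ equals the identity. Therefore, as soon as we know $[a,b]\ne 1$, the element $[a,b]$ qualifies as a generalized torsion element by definition.

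The main obstacle will be establishing $[a,b]\ne 1$, that is, that $G$ is non-abelian, uniformly for every $n\ge 1$ --- in particular for the small values $n=1,2,3$, where $L$ need not be hyperbolic and the geometric argument behind the Corollary does not apply. Here my plan is to argue by contradiction: if $[a,b]=1$ then the meridians $a$ and $b$ commute, so $G$ is abelian and hence isomorphic to its abelianization $H_1(S^3-L)\cong\mathbb{Z}^2$; but a two-component link whose complement has fundamental group $\mathbb{Z}^2$ must be the Hopf link, whereas $P(-2,3,2n)$ is not. In the hyperbolic range $n\ge 4$ one may instead appeal directly to the non-abelianness of hyperbolic $3$-manifold groups, and for each of the finitely many remaining cases $n=1,2,3$ it suffices to produce one explicit non-abelian quotient of $G$ in which the image of $[a,b]$ survives.
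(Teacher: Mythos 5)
Your proposal is correct and follows essentially the same route as the paper: apply Lemma~\ref{lem:decomp} to write $[a,b\bar{a}b^{n-1}\bar{a}b]$ as a non-empty product of conjugates of $[a,b]$, invoke Lemma~\ref{lem:rel} to conclude this product is trivial, and observe that $[a,b]\ne 1$ since only the unknot and the Hopf link have abelian groups. Your extra care about the non-abelianness for small $n$ is the same justification the paper gives in one parenthetical line.
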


\begin{proof}
By Lemma \ref{lem:decomp},
$[a,b\bar{a}b^{n-1}\bar{a}b]$ is expressed as a product of conjugates of $[a,b]$ if $n\ge 1$. 
We know that $[a,b]\ne 1$ in $G$, because $G$ is not abelian.
(The only knots and links  whose  groups are abelian are the unknot and the Hopf link.)
Then Lemma \ref{lem:rel} implies that $[a,b]$ is a generalized torsion element in $G$.
\end{proof}

\begin{remark}
When $n=0$, then link $L$ is the connected sum of the trefoil and the Hopf link.
Since the knot group of the trefoil contains a generalized torsion element, so does $G$.
For $n<0$, our argument in the proof of Theorem \ref{thm:pretzel}  does not work.
\end{remark}

\noindent
\textbf{Proof of Theorem \ref{thm:main1}.}
This follows from Theorems \ref{thm:w} and \ref{thm:pretzel}  and Corollary \ref{cor:weeks}. 
\hfill $\Box$

\medskip
\noindent
\textbf{Proof of Theorem \ref{thm:main2}.}
This immediately follows from Theorem \ref{thm:pretzel}. \hfill $\Box$

\subsection{Problems}

Kin and Rolfsen \cite{KR} show that
the pretzel links $P(-2,2k+1,2n)$ and $P(-2,-2k+1,2n)$ for $k\ge 1$ and $n\ge 2$
do not have bi-orderable link groups.
Hence their link groups are expected to admit generalized torsion elements
beyond our Theorem \ref{thm:main2}.
Fortunately, these pretzel links still have tunnel number one.
Thus it is possible to apply our procedure to get a presentation of link group with two generators and
a single relation.
We tried this,  but we could not find a generalized torsion element, 
because of  the complicated relation.

The exterior of pretzel links $P(-2,3,2n)$ is obtained from the magic manifold
by suitable Dehn filling.
If the magic manifold contains a generalized torsion element in its fundamental group,
then there would be a chance for the element to give a generalized torsion element
for $P(-2,3,2n)$ as in the proof of Corollary \ref{cor:weeks}.
However, the result of \cite{KR} seems to  suggest that 
the magic manifold would have bi-orderable fundamental group, so 
there is no generalized torsion element.

%%%%

\end{document}